\newcommand{\lbr}[2]{[\hspace*{-1.5pt} [ #1 , #2 ] \hspace*{-1.5pt}]}
\newcommand{\arxiv}[1]{,\ \url{#1}}
\newtheorem{lemma}{Lemma}
\newtheorem{cor}{Corollary}
\newtheorem*{theorem}{Theorem}
\newtheorem{prop}{Proposition}
\theoremstyle{definition}
\newtheorem*{remark}{Remark}
\newtheorem*{definition}{Definition}
\title{A cohomological construction of modules \\ over
Fedosov deformation quantization algebra  }
\author{S. A. Pol'shin\thanks{E-mail: polshin.s at gmail.com}\\
{\small Institute for Theoretical Physics} \\
{\small NSC Kharkov Institute of Physics and Technology} \\
{\small Akademicheskaia St. 1, 61108 Kharkov, Ukraine }}
\date{}
\begin{document}

\maketitle

\begin{abstract}
In certain neighborhood $U$ of an arbitrary point of a symplectic
manifold $M$ we construct a Fedosov-type star-product $\ast_L$ such that for an arbitrary leaf $\wp$ of a given polarization $\mathcal{D}\subset TM$ the algebra $C^\infty (\wp \cap U)[[h]]$ has a natural structure of left module over the deformed algebra $(C^\infty (U)[[h]], \ast_L)$. With certain additional assumptions on $M$, $\ast_L$ becomes a so-called star-product with separation of variables.

MSC 2000: 53D55 53D50 17B55 53C12

Key words: Fedosov quantization,  polarization,
contracting homotopy, bi-Lagrangian manifold, adapted star-product
\end{abstract}

\section{Introduction}

In~\cite{Fedosov1} B.V.Fedosov gave a simple construction of
deformation quantization of an arbitrary symplectic manifold (see
also~\cite{Fedosov2}). Later J.Donin~\cite{Donin} and
D.Farkas~\cite{Farkas} show the algebraic nature of Fedosov
construction. The problem of constructing modules over Fedosov
deformation quantization which generalize the states of usual
quantum mechanics  is of great interest. To this end, a notion of
adapted star-products was recently introduced~\cite{BGHHW}.
\begin{definition}
A star-product $\ast$ on $M$ is called adapted to a lagrangian
(or, more generally, coisotropic) submanifold $\wp\subset M$ if
the vanishing ideal of $\wp$ in the commutative algebra $C^\infty
(M)[[h]]$ is a left ideal in the deformed algebra
$\mathcal{A}=\left( C^\infty (M)[[h]],\ast\right)$. Then obviously
$C^\infty (\wp)[[h]]$ has a structure of left $\mathcal{A}$-module.
\end{definition}
Using the algebraic framework of~\cite{Donin,Farkas}, in the
present letter we construct a Fedosov-type star-product $\ast_L$
adapted to all the leaves of given polarization $L$ of a
symplectic manifold $M$. If $M$ is a so-called bi-Lagrangian
manifold, $\ast_L$ becomes a star-product with separation of
variables in the sense of Karabegov.

Like most of the papers on this subject, we are working in a
certain coordinate neighborhood where $\dim M$-dimensional basis
of  vector fields exists. However, only intrinsic geometric
structures affect the results, so we can try to glue star-products in
different neighborhoods together using the methods of algebraic geometry. This will be a matter of further publications. See
also~\cite{Donin,Vaisman} for the global construction of Fedosov
star-product algebras.

Plan of the present paper is the following. In Sec.~2 we consider
the Koszul complex for Weyl algebra, in Sec.~3 we define various
ideals associated with $L$, in Sec.~4 we define Fedosov complex
and prove the main result.

This article is dedicated to the memory of L.L.Vaksman with whom its preliminary versions were discussed.

\section{Koszul complex and Weyl algebra}

Let $M$ be a symplectic manifold, $\dim M=2\nu$, $U$ a certain
coordinate neighborhood  in $M$,  $A=C^\infty (U,\mathbb{C})$ a
$\mathbb{C}$-algebra of smooth functions on $U$ with pointwise
multiplication, and $E=C^\infty(U,T^\mathbb{C}M)$ a set of all
smooth complexified vector fields on $U$  with the natural
structure of an unitary $A$-module. By $T(E)$ and $S(E)$ denote
the tensor and symmetric algebra of $A$-module $E$ respectively,
and let $\wedge E^\ast$ be an algebra of smooth differential forms
on $U$. Let $\omega \in \wedge^2 E^\ast$ be a symplectic form on
$M$ and let $u:\ E\rightarrow \wedge^1 E^\ast$ be the mapping
$u(x)y=\omega (x,y), \ x,y\in E$. All the tensor products in the
present paper will be taken over $A$. Let
$$ a=x_1 \otimes \ldots \otimes x_m \otimes y_1 \wedge \ldots\wedge y_n \in
T^m (E)\otimes \wedge^n E^\ast.$$ Define the Koszul differential
of bidegree $(-1,1)$ on $T^\bullet (E)\otimes \wedge^\bullet
E^\ast$ as
$$\delta a=\sum\limits_i x_1 \otimes\ldots\otimes \hat{x}_i \otimes \ldots\otimes x_m \otimes u(x_i) \wedge y_1 \ldots \wedge y_n .$$
Let $\lambda$ be an independent variable (physically $\lambda=-i\hbar$) and $A[\lambda]=A\otimes_\mathbb{C} \mathbb{C}[\lambda]$ etc. In the
sequel we will write $A,E$ etc. instead of $A[\lambda],
A[[\lambda]],E[\lambda],$ $E[[\lambda]]$ etc. Let $\mathcal{I}_W$
be a two-sided ideal in $T(E)$ generated by relations $x \otimes
y-y \otimes x-\lambda\omega (x,y)=0$. The factor-algebra
$W(E)=T(E)/\mathcal{I}_W$ is called the Weyl algebra of $E$ and
let $\circ$ be the multiplication in $W(E)$.

A $\nu$-dimensional real distribution $\mathcal{D}\subset TM$ is
called a polarization   if it is (a) lagrangian, i.e.
$\omega(x,y)=0$ for all $x,y\in \mathcal{D}$ and (b) involutive,
i.e. $[x,y]\in \mathcal{D}$ for all $x,y\in \mathcal{D}$, where
$[.,.]$ is the commutator of vector fields on $M$. It is well
known~\cite{Etayo} that always we can choose a lagrangian
distribution $\mathcal{D}'$ transversal to $\mathcal{D}$ and let
$L$ and $L'$ be $A$-modules of smooth complexified vector fields
on $U$ tangent to $\mathcal{D}$ and $\mathcal{D}'$ respectively,
then $E=L\oplus L'$. Let $\alpha,\alpha_1,\ldots =1,\ldots,\nu$
and $\beta,\beta_1,\ldots=\nu+1,\ldots,2\nu$. Choose an $A$-basis
$\{ e_i|\, i=1,\ldots,2\nu \}$ in $E$  such that $\{ e_\alpha|\,
\alpha=1,\ldots,\nu \}$ and $\{ e_\beta|\, \beta=\nu+1,\ldots,2\nu
\}$ are the bases in $L$ and $L'$ respectively; it is always
possible in a certain coordinate neighborhood of an arbitrary point of $M$. Let $i_1,\ldots,i_p=1,\ldots,2\nu$ and
let $I=(i_1, \ldots,i_p)$  be an arbitrary sequence of indices. We
write $e_I=e_{i_1}\otimes\ldots\otimes e_{i_p}$ and we say that
the sequence $I$ is nonincreasing if $i_1 \geq i_2 \geq \ldots\geq
i_p$. We consider $\{\emptyset \}$ as a nonincreasing sequence and
$e_{ \{ \emptyset \} }=1$. We say that a sequence $I$ is of
$\alpha$-length $n$ if it contains $n$ elements less or equal
$\nu$. Let $\Upsilon^n$ be a set of all nonincreasing sequences of
$\alpha$-length $n$ and $\Upsilon_n =\bigcup_{p=n}^\infty
\Upsilon^p$. Then a variant of Poincare-Birkhoff-Witt theorem
holds~\cite{Bourbaki,Sridharan}.

\begin{theorem}[Poincare-Birkhoff-Witt]
Let $\tilde{S}(E)$ be  an $A$-submodule of $T(E)$ generated by elements $\{ e_I |\, I\in \Upsilon_0\}$. Then

(a) The restrictions $\mu_S |\tilde{S}(E)$ and
$\mu_W |\tilde{S}(E)$ of the canonical homomorphisms $\mu_S :\, T(E)\rightarrow S(E)$ and $\mu_W :\, T(E)\rightarrow W(E)$
are $A$-module isomorphisms.

(b) $\{ \mu_S (e_I) |\, I\in \Upsilon_0\}$ and
$\{ \mu_W (e_I) |\, I\in \Upsilon_0\}$ are $A$-bases of $S(E)$
and $W(E)$ respectively.

(c) $T(E)=\tilde{S}(E)\oplus \mathcal{I}_W$.
\end{theorem}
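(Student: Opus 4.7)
The plan is to establish part (a) directly and obtain (b) and (c) as formal consequences. By construction, $\tilde{S}(E)$ is the free $A$-module on $\{e_I\mid I\in \Upsilon_0\}$ (these elements lie inside the $A$-basis $\{e_I\}_I$ of $T(E)$, hence are $A$-independent), so (b) is equivalent to (a). Part (c) follows from (a) for $\mu_W$: since $\mathcal{I}_W=\ker\mu_W$ and $\mu_W|\tilde{S}(E)$ is an isomorphism, every $x\in T(E)$ decomposes as $x=x'+(x-x')$, where $x'\in\tilde{S}(E)$ is the unique preimage of $\mu_W(x)$ and $x-x'\in\mathcal{I}_W$; the sum is direct because $\mu_W|\tilde{S}(E)$ is injective.

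For the symmetric part, $S(E)$ is the ordinary polynomial $A$-algebra on $e_1,\dots,e_{2\nu}$, which has $\{\mu_S(e_I)\mid I\in\Upsilon_0\}$ as a free $A$-basis. Thus $\mu_S|\tilde{S}(E)$ carries an $A$-basis to an $A$-basis and is an $A$-module isomorphism.

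For the Weyl algebra, surjectivity of $\mu_W|\tilde{S}(E)$ follows from a straightening argument: using $x\otimes y\equiv y\otimes x+\lambda\omega(x,y)\pmod{\mathcal{I}_W}$, any tensor monomial can be rewritten modulo $\mathcal{I}_W$ as a finite $A$-combination of nonincreasing monomials of no greater tensor degree. Termination is guaranteed because each swap strictly decreases the number of inversions while the correction term has strictly smaller tensor degree. Injectivity is the substantive point. I would establish it by constructing an $A$-algebra homomorphism $\pi\colon W(E)\to\operatorname{End}_A(\tilde{S}(E))$ with the property that $\pi(\mu_W(e_I))\cdot 1=e_I$ for every $I\in\Upsilon_0$. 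The operators $\pi(e_i)$ are defined by a length-based recursion: $\pi(e_i)\cdot e_{(j,J)}=e_i\otimes e_{(j,J)}$ if $i\geq j$, and otherwise one applies the Weyl relation to reduce to shorter inputs. Given such a $\pi$, the composite
$$\tilde{S}(E)\xrightarrow{\mu_W|\tilde{S}(E)}W(E)\xrightarrow{\pi}\operatorname{End}_A(\tilde{S}(E))\xrightarrow{\mathrm{ev}_1}\tilde{S}(E)$$
equals the identity by a straightforward induction on tensor length, forcing $\mu_W|\tilde{S}(E)$ to be injective.

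The principal obstacle is to verify that the operators $\pi(e_i)$ actually satisfy $[\pi(e_i),\pi(e_j)]=\lambda\omega(e_i,e_j)\cdot\mathrm{id}$, so that $\pi$ descends from $T(E)$ to $W(E)$. This is a diamond/confluence problem: the recursive definition involves reordering triples $e_i\otimes e_j\otimes e_k$, and one must check that the two possible orders of reduction produce identical normal forms. The check uses antisymmetry of $\omega$ together with the fact that each $\lambda\omega(e_i,e_j)\in A$ is central in $\operatorname{End}_A(\tilde{S}(E))$. Alternatively, once termination of the rewriting system is in place, one can invoke Bergman's Diamond Lemma, or quote the filtered-algebra framework of~\cite{Sridharan} directly to obtain the isomorphism $\tilde{S}(E)\cong W(E)$.
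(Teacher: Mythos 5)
Your argument is correct and is essentially the classical Poincar\'e--Birkhoff--Witt proof: straightening gives that the nonincreasing monomials span, and a representation of $W(E)$ on the free module of ordered monomials gives independence, with the diamond/confluence check trivialized because the correction terms $\lambda\omega(e_i,e_j)$ lie in the central subring $A$. The paper itself offers no proof of this theorem --- it is quoted with references to Bourbaki and Sridharan --- and those sources argue by exactly this route, so your proposal matches the intended argument.
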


\begin{prop}
The choice of  bases in  $L$ and $L'$ does not affect the resulting isomorphism $W(E)\stackrel{\cong}{\rightarrow} S(E)$.
\end{prop}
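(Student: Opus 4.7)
The plan is to exhibit the PBW isomorphism $\phi:W(E)\stackrel{\cong}{\to}S(E)$ coming from the theorem as the outcome of a construction that refers only to the Lagrangian splitting $E=L\oplus L'$ and to the Weyl multiplication, without ever fixing bases of $L$ and $L'$. Once such a basis-free description is produced, any two choices of adapted bases $\{e_i\}$ and $\{e_i'\}$ must induce the same $\phi$.

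First I would exploit the Lagrangian condition. Since $\omega$ vanishes on $L$ and on $L'$, the Weyl relation $x\circ y-y\circ x=\lambda\omega(x,y)$ shows that the $A$-subalgebras $\overline L,\overline{L'}\subset W(E)$ generated by $L$ and $L'$ are commutative. Applying the PBW theorem to purely $\alpha$-indexed (resp.\ purely $\beta$-indexed) nonincreasing monomials forces the canonical surjections of $A$-algebras $S(L)\to\overline L$ and $S(L')\to\overline{L'}$ to be injective, hence $A$-algebra isomorphisms with no basis needed. Consequently the multiplication map
\[
\mu:\ \overline{L'}\otimes_A\overline L\to W(E),\qquad w'\otimes v\mapsto w'\circ v,
\]
is well defined and intrinsic to the splitting.

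Next I would match $\mu$ with $\phi$. Composing $\mu$ with the canonical algebra isomorphism $S(L')\otimes_A S(L)\cong S(L\oplus L')=S(E)$ and with the identifications of the previous paragraph, one obtains a basis-free $A$-module map $\sigma:S(E)\to W(E)$. Because a nonincreasing sequence $I\in\Upsilon_0$ lists all $\beta$-entries before all $\alpha$-entries,
\[
\mu_W(e_I)=(e_{\beta_1}\circ\cdots\circ e_{\beta_q})\circ(e_{\alpha_1}\circ\cdots\circ e_{\alpha_n})
\]
coincides with $\sigma$ applied to $\mu_S(e_I)=e_{\beta_1}\cdots e_{\beta_q}e_{\alpha_1}\cdots e_{\alpha_n}\in S(E)$. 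Since the theorem supplies $\{\mu_W(e_I)\}$ and $\{\mu_S(e_I)\}$ as $A$-bases of $W(E)$ and $S(E)$, the map $\sigma$ is an isomorphism and agrees with $\phi^{-1}$ on a generating set; hence $\phi=\sigma^{-1}$ is manifestly independent of the choice of bases of $L$ and $L'$.

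The one delicate point is the identification $\overline L\cong S(L)$ (and likewise for $L'$): \emph{a priori} elements of $L$ might satisfy extra Weyl relations inside $W(E)$ beyond commutativity, which would obstruct the intrinsic description. This is precisely where the PBW theorem is invoked — the purely $\alpha$-indexed nonincreasing monomials are already $A$-linearly independent in $W(E)$, so the surjection $S(L)\to\overline L$ is injective. Everything else is bookkeeping: commutativity inside each of $\overline L,\overline{L'}$ absorbs the ordering of single-side indices, while the splitting $E=L\oplus L'$ absorbs the cross-side ordering.
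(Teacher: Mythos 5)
Your proof is correct, but it takes a different route from the paper's. The paper argues by direct comparison of two adapted bases: for a block-diagonal change of basis $e'_i=A_i^j e_j$ it shows that every $a'\in\tilde{S}'(E)$ has a (unique, by Theorem~1(c)) counterpart $a\in\tilde{S}(E)$ with $\mu_W(a)=\mu_W(a')$ and $\mu_S(a)=\mu_S(a')$ --- the point being that expanding $e'_I$ only requires reorderings \emph{within} the $L$-block and within the $L'$-block, which cost nothing in either $S(E)$ or $W(E)$ because both distributions are Lagrangian. You instead exhibit the inverse isomorphism intrinsically, as the normal-ordering map $S(E)\cong S(L')\otimes_A S(L)\cong \overline{L'}\otimes_A\overline{L}\xrightarrow{\ \circ\ }W(E)$, and check it agrees with $\mu_W\iota_1(\mu_S\iota_1)^{-1}$ on the PBW basis. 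The underlying mechanism is identical (commutativity of the subalgebras generated by $L$ and $L'$, plus linear independence of the nonincreasing monomials), and your one flagged subtlety --- injectivity of $S(L)\to\overline{L}$ --- is handled correctly via the purely $\alpha$-indexed part of the PBW basis. What your version buys is a genuinely basis-free characterization of the isomorphism (it visibly depends only on the ordered splitting $E=L\oplus L'$), which is stronger and more reusable than the paper's pairwise comparison; what it costs is the extra scaffolding of the subalgebras $\overline{L},\overline{L'}$ and the tensor decomposition, where the paper gets by with Theorem~1(c) alone.
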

\begin{proof}
Let $\{ e'_{i}=A_{i}^j e_j\}$ be a new basis in $E$ such that $A_{\alpha}^\beta=A^\alpha_{\beta}=0$  and let $\tilde{S}'(E)$ be a submodule in $T(E)$ generated by $ \{ e'_{I}|\, I\in \Upsilon_0\}$.
Since both $L$ and $L'$ are lagrangian, we see that for any element $a'\in \tilde{S}'(E)$ an element $a\in \tilde{S}(E)$ there exists such that $\mu_W (a)=\mu_W (a')$ and $\mu_S (a)=\mu_S (a')$. Due to Theorem~1(c) such an element is unique and the map $a'\mapsto a$ is an isomorphism.
\end{proof}

Let $\iota_m \ (m=1,2)$ be a natural embedding of $m$th direct summand
in the rhs of Theorem~1~(c) into $T(E)$, so $\mu_{S,W}|\tilde{S}(E)=\mu_{S,W}\iota_1$. Then from Theorem~1~(c) it follows that a short exact sequence of $A$-modules
$$\xymatrix@1{
0 \ar[r] & \mathcal{I}_W \ar[r]^{\iota_2} &
 T(E) \ar[r]^{\mu_W} & W(E) \ar[r] & 0
}
$$
splits, then we have another short exact sequence of $A$-modules
\begin{equation}\label{split}
\xymatrix@1@C+9pt{ 0 \ar[r] & \mathcal{I}_W \otimes \wedge E^*
\ar[r]^(.48){\iota_2 \otimes\mathop{{\rm id}}} & T(E) \otimes
\wedge E^* \ar[r]^{\mu_W \otimes \mathop{{\rm id}}} & W(E) \otimes
\wedge E^* \ar[r] & 0 }
\end{equation}
and $\iota_1\otimes\mathop{{\rm id}}$ is a natural embedding of
$\tilde{S}(E)\otimes\wedge E^*$ into $T(E)\otimes\wedge E^*$.

It is easily seen that $\delta$ preserves $\mathcal{I}_W \otimes
\wedge E^\ast$, so it induces a well-defined differential on
$W(E)\otimes\wedge E^\ast$ due to~(\ref{split}). It is well known
that $u$ is an isomorphism due to  nondegeneracy of $\omega$. So
we can define the so-called contracting homotopy of bidegree
$(1,-1)$ on $S^\bullet (E)\otimes\wedge^\bullet E^\ast$ which to
an element
$$a=x_1 \odot\ldots\odot x_m \otimes y_1 \wedge\ldots\wedge y_n
\in S^m (E)\otimes \wedge^n E^*,$$
where $\odot$ is the multiplication in $S(E)$, assigns the element
$$\delta^{-1} a=\frac{1}{m+n}\sum\limits_i (-1)^{i-1} u^{-1} (y_i)\odot x_1 \odot\ldots\odot x_m \otimes y_1                                                                 \wedge\ldots\wedge \hat{y}_i \wedge\ldots\wedge y_n $$
at $m+n>0$ and $\delta^{-1} a=0$ at $m=n=0$.

Let $a=\sum\limits_{m,n\geq 0} a_{mn}$, where $a_{mn}\in S^m (E)\otimes \wedge^n E^*$ and $\tau:\ a\mapsto a_{00}$ is the projection onto a component of bidegree $(0,0)$. Carry $\delta$ onto $S(E)\otimes \wedge E^*$ using the canonical homomorphism $\mu_S \otimes \mathop{{\rm id}}$. Then it is well known that the following equality
\begin{equation}\label{eq1}
\delta\delta^{-1}+\delta^{-1}\delta+\tau=\mathop{Id}
\end{equation}
holds. Carry the grading of $S(E)$ onto $W(E)$ using the
isomorphism $S(E)\cong W(E)$,  then $W^1 (E)\cong E$ and we will
identify them. It is easily seen that $\delta$ preserves
$\tilde{S}(E)\otimes \wedge E^*$,  so each arrow of the following
commutative diagram of $A$-modules commutes with $\delta$.
$$\xymatrix{
 & T(E)\otimes \wedge E^* \ar[ddl]_{\mu_S \otimes \mathop{{\rm id}}}
\ar[ddr]^{\mu_W \otimes \mathop{{\rm id}}} \\
 & \tilde{S}(E)\otimes \wedge E^* \ar[u]|{\iota_1 \otimes \mathop{{\rm id}}}
\ar[dl]^{\mu_S \iota_1 \otimes \mathop{{\rm id}}}|(.35)\cong
\ar[dr]_{\mu_W \iota_1 \otimes \mathop{{\rm id}}}|(.35)\cong \\
S(E)\otimes \wedge E^*  && W(E)\otimes \wedge E^*. }$$ Then
$\delta$ commutes with $A$-module isomorphism $\mu_W \iota_1
(\mu_S \iota_1)^{-1}\otimes \mathop{{\rm id}}$. Carry the
contracting homotopy $\delta^{-1}$ and the projection $\tau$ from
$S(E)\otimes\wedge E^*$ onto $W(E)\otimes\wedge E^*$ via this
isomorphism, then the equality~(\ref{eq1}) remains true. Let
$\delta W^\bullet =(W(E)\otimes \wedge^n E^*, \ \delta)$, then
from~(\ref{eq1}) it follows that
\begin{equation}\label{eq4} H^0 (\delta W^\bullet)=A, \qquad H^n
(\delta W^\bullet)=0, \  n>0.
\end{equation}

\section{The ideals}

Let $\mathcal{I}_\wedge$ be an ideal in $\wedge E^*$ those elemens annihilate the polarization $L$, i.e. $\mathcal{I}_\wedge=\sum\nolimits_{n=1}^\infty
\mathcal{I}_\wedge^n$, where
$$\mathcal{I}_\wedge^n =\{ \alpha\in \wedge^n E^* |\, \alpha (x_1,\ldots,x_n)=0
\ \forall x_1,\ldots,x_n \in L \}.$$ It is well known that locally
$\mathcal{I}_\wedge$ is generated by $\nu$ independent 1-forms
which are the basis of $\mathcal{I}_\wedge^1$. On the other hand,
$L$ is lagrangian, so from the dimensional reasons we obtain
$u(L)=\mathcal{I}_\wedge^1$, so
\begin{equation}\label{eq5}
\mathcal{I}_\wedge=(u(L)).
\end{equation}

Let $\mathcal{I}_L$ be a left ideal in $W(E)$ generated by elements of $L$ and $\mathcal{I}=\mathcal{I}_L\otimes\wedge
E^* +W(E)\otimes\mathcal{I}_\wedge$  a left ideal in
$W(E)\otimes\wedge E^*$. Then from~(\ref{eq5}) it follows that
\begin{equation}\label{eq6}
\delta(\mathcal{I})\subset \mathcal{I}.
\end{equation}

Let $\mathbb{N}_0=\mathbb{N}\cup \{0\}$. A semigroup $(S,\vee)$ is
called \textit{filtered} if a decreasing filtration $S_i,\ i\in
\mathbb{N}_0$ on $S$ there exists such that $S_0 =S$ and $S_i \vee
S_j \subset S_{i+j}\ \forall i,j$. Let $I,J\in \Upsilon_0$,
$I=(i_1,\ldots,i_m)$, $J=(j_i,\ldots,j_n)$ and let $I\vee J$ be
the set $\{ i_1,\ldots,i_m, j_i,\ldots,j_n \}$ arranged in the
descent order. Then $(\Upsilon_0,\vee)$ becomes a semigroup
filtered by $\Upsilon_i$.

\begin{lemma}\label{lem1}
Let $\mathcal{I}^{(S)}_L$ be an ideal  in $S(E)$ generated by
elements of $L$, then $\mu_W \iota_1 (\mu_S
\iota_1)^{-1}\mathcal{I}^{(S)}_L=\mathcal{I}_L$.
\end{lemma}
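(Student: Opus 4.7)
The plan is to use PBW on both sides to reduce the lemma to an equality of $A$-bases in $W(E)$. By the commutative PBW theorem applied to the symmetric algebra, the ideal $\mathcal{I}^{(S)}_L$ in $S(E)$ admits as $A$-basis exactly $\{\mu_S(e_I):I\in\Upsilon_1\}$, and the isomorphism $\phi=\mu_W\iota_1(\mu_S\iota_1)^{-1}$ sends $\mu_S(e_I)\mapsto\mu_W(e_I)$. Writing $M=\bigoplus_{I\in\Upsilon_1} A\cdot\mu_W(e_I)\subseteq W(E)$, the statement thus reduces to showing $M=\mathcal{I}_L$.

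The inclusion $M\subseteq\mathcal{I}_L$ is essentially immediate. If $I=(i_1,\dots,i_p)\in\Upsilon_1$ is nonincreasing, then because $I$ contains some index $\leq\nu$ and is nonincreasing, its smallest (hence last) entry satisfies $i_p\leq\nu$, so $e_{i_p}\in L$ and
$\mu_W(e_I)=\mu_W(e_{(i_1,\dots,i_{p-1})})\circ e_{i_p}\in W(E)\circ L=\mathcal{I}_L.$

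For the converse $\mathcal{I}_L\subseteq M$, I would again invoke PBW for $W(E)$ and reduce to showing $\mu_W(e_J)\circ e_\gamma\in M$ for every $J\in\Upsilon_0$ and every $\gamma\leq\nu$. The idea is to sort the product $e_{j_1}\circ\cdots\circ e_{j_p}\circ e_\gamma$ into nonincreasing order by pushing $e_\gamma$ leftward through the Weyl relations $e_{j_k}\circ e_\gamma=e_\gamma\circ e_{j_k}+\lambda\omega(e_{j_k},e_\gamma)$. The crucial observation is that such a swap is only ever required past a factor $e_{j_k}$ with $j_k<\gamma\leq\nu$, i.e.\ past another element of $L$; since $L$ is Lagrangian, $\omega(e_{j_k},e_\gamma)=0$, and no commutator residue is produced. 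Hence $\mu_W(e_J)\circ e_\gamma$ reduces to a single PBW basis vector $\mu_W(e_{J'})$, where $J'$ is obtained by inserting $\gamma$ into $J$ in the appropriate slot; since $\gamma\leq\nu$, $J'\in\Upsilon_1$, and the product lies in $M$.

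The one point that genuinely needs verification — and the main obstacle one might fear — is precisely this absence of lower-order commutator residues. In general, commuting $e_\gamma$ past an $e_\beta$ would generate a scalar $\lambda\omega(e_\gamma,e_\beta)$ whose PBW label drops from $\Upsilon_1$ into $\Upsilon^0$, which would defeat the argument. The observation that saves us is structural: $\beta$-indices all satisfy $j_k>\nu\geq\gamma$ and so already stand to the left of $\gamma$ in nonincreasing order, so $e_\gamma$ only ever commutes with strictly smaller $\alpha$-indices, where the Lagrangian condition on $L$ kills the commutator. This is really the heart of the lemma and the only place where the Lagrangian hypothesis on $L$ (as opposed to on $L'$) is essential.
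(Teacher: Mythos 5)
Your proof is correct and follows essentially the same route as the paper's: both reduce the claim via the PBW bases $\{\mu_W(e_I)\}$, $\{\mu_S(e_I)\}$ to the identification of $\mathcal{I}_L$ and $\mathcal{I}_L^{(S)}$ with the span of the basis vectors indexed by $\Upsilon_1$, prove one inclusion by noting the last entry of a nonincreasing $I\in\Upsilon_1$ is at most $\nu$, and prove the other by sorting $\mu_W(e_J)\circ e_\gamma$ using that elements of the Lagrangian $L$ commute in $W(E)$. Your explicit verification that no commutator residues arise when pushing $e_\gamma$ leftward is just a more detailed rendering of the paper's one-line appeal to $e_{\alpha_1}\circ e_{\alpha_2}=e_{\alpha_2}\circ e_{\alpha_1}$.
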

\begin{proof}
Since $L$ is lagrangian, we have $e_{\alpha_1}\circ
e_{\alpha_2}=e_{\alpha_2}\circ e_{\alpha_1}$
$\forall\alpha_1,\alpha_2$, thus for any $I\in\Upsilon_0$ we have
$\mu(e_I)\circ e_\alpha=\mu(e_{I\vee\{\alpha\}})$ and
$I\vee\{\alpha\}\in \Upsilon_1$. Then from Theorem~1~(b) it
follows $\mathcal{I}_L  \subset\mathop{{\rm span}}_A \{ \mu_W
(e_I) |\, I\in \Upsilon_1\}$. On the other hand, if $I=(i_1
,\ldots,i_p)\in \Upsilon_1$ then $1\leq i_p \leq n$, so $\mu_W
(e_I)\in\mathcal{I}_L$. Then $\mathop{{\rm span}}_A \{ \mu_W (e_I)
|\, I\in \Upsilon_1\}\subset \mathcal{I}_L$  and we obtain
 $\mathcal{I}_L=\mu_W \iota_1 (\tilde{S}_1 (E))$,
where $\tilde{S}_i (E)= \mathop{{\rm span}}_A \{ e_I |\, I\in
\Upsilon_i\},\ i\in\mathbb{N}_0$ is a decreasing filtration on
$\tilde{S}(E)$. Analogously $\mathcal{I}_L^{(S)}=\mu_S \iota_1
(\tilde{S}_1 (E))$, which proves the lemma.
\end{proof}
From~(\ref{eq5}) it is easily seen that $\delta^{-1}$ preserves the submodule $\mathcal{I}^{(S)}_L\otimes\wedge E^*
+S(E)\otimes\mathcal{I}_\wedge$ of $S(E)\otimes\wedge E^*$,
then using Lemma~\ref{lem1}  we obtain
\begin{equation}\label{eq7}
\delta^{-1}(\mathcal{I})\subset \mathcal{I}.
\end{equation}
\begin{remark}
The choice of $\tilde{S}(E)$ in Theorem~1 is crucial for our
construction of contracting homotopy of $\delta W^\bullet$. The usual choice of submodule $S'(E)$ of symmetric tensors in $T(E)$ instead
of $\tilde{S}(E)$ yields another contracting homotopy of
$\delta W^\bullet$ which does not preserve $\mathcal{I}$.
\end{remark}
Let $\nabla$ be an exterior derivative on $\wedge E^*$ which to an element $\alpha\in \wedge^{n-1} E^*$ assigns the element
\begin{equation}\label{eq8}
\begin{split}
(\nabla\alpha)(x_1,\ldots,x_n)=
\sum\limits_{1\leq i<j\leq n}
(-1)^{i+j} \alpha([x_i,x_j],x_1,\ldots,\hat{x}_i,\ldots,\hat{x}_j,\ldots x_n)
\\ +\sum\limits_{1\leq i\leq n} (-1)^{i-1}x_i \alpha
(x_1,\ldots,\hat{x}_i,\ldots, x_n).
\end{split}
 \end{equation}
Let $\nabla_x y\in E,\ x,y\in E$ be a connection on $M$, then we
can extend $\nabla_x$ to $T(E)$ by the Leibniz rule. It is well
known that we can always choose $\nabla$ in such a way that
$\nabla_x \omega=0\ \forall x\in E$. It is well known that such a
connection preserve $\mathcal{I}_W$ for all $x\in E$, so it
induces a well-defined derivation on $W(E)$. Now consider $\nabla$
as a map $W(E)\rightarrow W(E)\otimes \wedge^1 E^*$ such that
$(\nabla a)(x)=\nabla_x a$. Then it is well known that $\nabla$
may be extended to a $\mathbb{C}[[\lambda]]$-linear derivation of
bidegree $(0,1)$ of the whole algebra
$W^\bullet(E)\otimes\wedge^\bullet E^*$ whose restriction to
$\wedge E^*$ coincides with~(\ref{eq8})

\begin{lemma}\label{lem2}
Let $a\in W(E)\otimes\wedge^n E^*$ and $a(x_1,\ldots,x_n)\in V$ for all $x_1,\ldots,x_n \in L$, where $V$ is a submodule of $W(E)$. Then
$a\in V\otimes\wedge E^* +W(E)\otimes\mathcal{I}_\wedge$.
\end{lemma}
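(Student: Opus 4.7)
The plan is to exploit a basis of $\wedge E^{*}$ adapted to the decomposition $E=L\oplus L'$. Let $\{e^i\}$ be the basis of $E^{*}$ dual to $\{e_i\}$. From~(\ref{eq5}) and the isomorphism $u\colon E\to E^{*}$ one gets that $\mathcal{I}_\wedge^1$ has rank $\nu$, and since the forms $e^\beta$ ($\nu<\beta\leq 2\nu$) visibly annihilate $L$ and are $A$-independent, they form an $A$-basis of $\mathcal{I}_\wedge^1$. Consequently any wedge $e^{i_1}\wedge\cdots\wedge e^{i_n}$ containing at least one factor with index $>\nu$ lies in $\mathcal{I}_\wedge$, whereas the "purely $\alpha$" wedges $e^{\alpha_1}\wedge\cdots\wedge e^{\alpha_n}$ (all indices $\leq\nu$) restrict to the standard basis of $\wedge^n L^{*}$.

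With this in hand I would expand
\[ a=\sum_I a_I\otimes e^I, \qquad a_I\in W(E), \]
summing over strictly increasing multi-indices $I=(i_1<\cdots<i_n)$, and split $a=a'+a''$, where $a'$ collects the terms with $I\subset\{1,\ldots,\nu\}$ and $a''$ the remaining ones. By the previous paragraph the basis elements appearing in $a''$ already belong to $\mathcal{I}_\wedge$, so $a''\in W(E)\otimes\mathcal{I}_\wedge$ with no further work.

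It remains to place $a'$ in $V\otimes\wedge E^{*}$. For any strictly increasing sequence $\alpha_1<\cdots<\alpha_n\leq\nu$, the tuple $(e_{\alpha_1},\ldots,e_{\alpha_n})$ kills every basis wedge occurring in $a''$, so
\[ a_{(\alpha_1,\ldots,\alpha_n)}=a'(e_{\alpha_1},\ldots,e_{\alpha_n})=a(e_{\alpha_1},\ldots,e_{\alpha_n})\in V \]
by hypothesis. Since these are exactly the coefficients of $a'$ in the chosen basis, $a'\in V\otimes\wedge E^{*}$, which finishes the proof.

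The argument is essentially bookkeeping once the basis is chosen; the only step demanding a small amount of care is the identification of $\mathcal{I}_\wedge^1$ with the $A$-span of the $e^\beta$, but this is forced by~(\ref{eq5}) together with the nondegeneracy of $\omega$ and is decisive for the clean separation of $a$ into its $V$-part and its $\mathcal{I}_\wedge$-part.
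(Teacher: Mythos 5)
Your proposal is correct and follows essentially the same route as the paper: expand $a$ in the dual basis of $E^*$, observe that the wedges involving some $\tilde{e}\vphantom{e}^\beta$ with $\beta>\nu$ lie in $W(E)\otimes\mathcal{I}_\wedge$, and recover the remaining coefficients as $a(e_{\alpha_1},\ldots,e_{\alpha_n})\in V$. The only difference is that you spell out the justification that the $\tilde{e}\vphantom{e}^\beta$ span $\mathcal{I}_\wedge^1$ (via the rank count forced by~(\ref{eq5})), which the paper leaves as ``easily seen.''
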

\begin{proof}
Let $\{ \tilde{e}\vphantom{e}^i|\, i=1,\ldots,2\nu\}$ be a basis
of $E^*$ dual to $\{ e_i \}$, i.e. $\tilde{e}\vphantom{e}^i
(e_j)=\delta^i_j$. Then from the well-known theorem of basic
algebra it follows that an arbitrary $a\in W(E)\otimes\wedge^n
E^*$ may be represented in the form
$a=\sum\limits_{i_1<\ldots<i_n} a_{i_1\ldots i_n} \otimes
\tilde{e}\vphantom{e}^{i_1}\wedge\ldots\wedge
\tilde{e}\vphantom{e}^{i_n},$ where $a_{i_1\ldots i_n}\in W(E)$
for all $i_1,\ldots,i_n$. It is easily seen that $\{
\tilde{e}\vphantom{e}^\beta |\, \beta=\nu+1,\ldots,2\nu \}$
generate $\mathcal{I}_\wedge$, so
$$a=\sum\limits_{\alpha_1<\ldots<\alpha_n} a_{\alpha_1\ldots \alpha_n} \otimes
\tilde{e}\vphantom{e}^{\alpha_1}\wedge\ldots\wedge
\tilde{e}\vphantom{e}^{\alpha_n}+W(E)\otimes \mathcal{I}^n_\wedge.$$
On the other hand, $a_{\alpha_1\ldots \alpha_n}
=a(e_{\alpha_1},\ldots,e_{\alpha_n})\in V$ due to the lemma's conditions. So $a\in V\otimes\wedge E^* +W(E)\otimes\mathcal{I}_\wedge$.
 \end{proof}

We say that a polarization (or, more generally, distribution)
$\mathcal{D}$ is self-parallel wrt $\nabla$ iff
\begin{equation}\label{eq9}
\nabla_x y\in L, \quad x,y\in L.
\end{equation}
For a given $\mathcal{D}$, a torsion-free connection which
obeys~(\ref{eq9}) always exists (\cite{BF}, Theorem~5.1.12).
Proceeding along the same lines as in the proof of \cite{Xu},
Lemma 5.6, we obtain another torsion-free connection
$\tilde{\nabla}$ on $M$ such that $\tilde{\nabla}_x \omega=0\
\forall x\in E$ and $\mathcal{D}$ is self-parallel wrt
$\tilde{\nabla}$. Suppose $\nabla$ is a connection (not
necessarily torsion-free) such that $\nabla_x \omega=0\ \forall
x\in E$ and $\mathcal{D}$ is self-parallel wrt $\nabla$. Then
$\nabla_x \mathcal{I}_L\subset \mathcal{I}_L \ \forall x\in L$, so
using Lemma~\ref{lem2} we obtain $\nabla\mathcal{I}_L\subset
\mathcal{I}$. On the other hand, the involutivity of $L$ together
with~(\ref{eq8}) yield $\nabla\mathcal{I}_\wedge
\subset\mathcal{I}_\wedge$ (Frobenius theorem), so we finally
obtain
\begin{equation}\label{eq10}
\nabla\mathcal{I}\subset\mathcal{I}.
\end{equation}
Let $\mathcal{R}\in E\otimes E^* \otimes\wedge^2 E^*$ be the
curvature tensor of $\nabla$, i.e.
$$\mathcal{R}(x,y)z=\nabla_x \nabla_y z-\nabla_y \nabla_x z-\nabla_{[x,y]}z, \qquad x,y,z\in E.$$
Then using~(\ref{eq9}) and the involutivity
of $L$  we obtain $\mathcal{R}(x,y)z\in L \quad \forall x,y,z \in
L$. Then Lemma~\ref{lem2} yields $\mathcal{R}(x,y)\in \mathcal{I}
\quad \forall x,y \in L$. On the other hand,
$\mathcal{R}(x,y)\in W^1 (E)\otimes \wedge^1 E^*$, so an element
$R\in W^2 (E)\otimes \wedge^2 E^*$ there exists such that
$R(x,y)=\delta^{-1} (\mathcal{R}(x,y)) \quad \forall x,y\in E$.
Using~(\ref{eq7}) we see that $R(x,y)\in \mathcal{I}_L \ \forall
x,y \in L$ and using Lemma~\ref{lem2} we obtain
\begin{equation}\label{eq11}
R\in \mathcal{I}.
\end{equation}
Let $R^i_{\ jkl}=\tilde{e}\vphantom{e}^i(\mathcal{R}(e_k,e_l)e_j)$
be the components of curvature tensor. Denote $R^{ij}_{\ \
kl}=\omega^{jp} R^i_{\ pkl}$, where $\omega^{ij}$ is the matrix
inverse to $\omega(e_i,e_j)$. Then it is easily seen that the difference between our $R$ and the one introduced in~\cite{Fedosov1,Fedosov2} belongs to the center of $W(E)\otimes \wedge E^*$, so we obtain
$$\nabla^2 a=\frac{1}{\lambda} \lbr{R}{a} \qquad \forall a\in W(E)\otimes\wedge E^* ,$$
where $\lbr{\cdot}{\cdot}$ is the commutator in $W(E)\otimes\wedge E^*$.

Let $T\in W^1 (E)\otimes \wedge^2 E^*$, $T(x,y)=\nabla_x y-\nabla_y x-[x,y]$ be the torsion of  $\nabla$.
Using~(\ref{eq9}) and the involutivity of $\mathcal{D}$ we see that $T(x,y)\in L \ \forall
x,y \in L$, then using Lemma~\ref{lem2} we obtain
\begin{equation}\label{eq11'}
T\in \mathcal{I}.
\end{equation}

Suppose $\wp$ is a leaf of the distribution $\mathcal{D}$ such
that $\wp\cap U\not=\emptyset$, $\Phi=\{ f\in A|\ f|\wp=0\}$ is
the vanishing ideal of $\wp$ in $A$, $\mathcal{I}_\Phi$ is an
ideal in $W(E)\otimes\wedge E^*$ generated by elements of $\Phi$,
and $\mathcal{I}_{\text{fin}}=\mathcal{I}+\mathcal{I}_\Phi$ is a
homogeneous ideal in $W(E)\otimes\wedge E^*$. Then due
to~(\ref{eq6}),(\ref{eq7}) we can define the subcomplex $\delta
\mathcal{I}_{\text{fin}}^\bullet=(\mathcal{I}_{\text{fin}},\delta)$ with the same contracting homotopy $\delta^{-1}$. Note that $\tau
(\mathcal{I}_{\text{fin}})=\Phi$, then using~(\ref{eq1}) we obtain
\begin{equation}\label{eq13}
H^0 (\delta\mathcal{I}_{\text{fin}}^\bullet)=\Phi, \qquad H^n
(\delta\mathcal{I}_{\text{fin}}^\bullet)=0,\ n>0
\end{equation}

It is easily seen that vector fields of $L$
preserve $\Phi$, i.e. $(\nabla f)(x)\in\Phi \quad \forall f\in \Phi, x\in L$. Then from Lemma~\ref{lem2} we obtain $\nabla\Phi\in\mathcal{I}_\Phi+ \mathcal{I}^1_\wedge$, so we finally obtain
\begin{equation}\label{eq11a}
\nabla\mathcal{I}_\Phi \subset\mathcal{I}_{\text{fin}}.
\end{equation}

\section{Fedosov complex and star-product}

Let $W^{(i)}(E)$ be the grading in $W(E)$ which coincides with
$W^i (E)$ except for the $\lambda\in W^{(2)}(E)$, and let
$W_{(i)}(E)$ be a decreasing filtration generated by $W^{(i)}(E)$.
Suppose $\widehat{W}(E)$, $\widehat{\mathcal{I}}$ are completions
of $W(E)$, $\mathcal{I}$ with respect to this filtration, then
$\widehat{\mathcal{I}}$ is a left ideal in $\widehat{W}(E)\otimes
\wedge E^*$.
 Let $A_i,\ i\in\mathbb{N}_0$ be an $(\lambda)$-adic
filtration in $A$, then $\tau (W_{(i)}(E))\subset A_{\{ i/2\}}$.
Then $W_{(2i)}(E)\subset \tau^{-1} (A_{i})$, so $\tau$ is
continuous in the topologies generated by $W_{(i)}(E)$ and $A_{i}$
and thus can be extended to a mapping $\widehat{W}(E)\rightarrow
\widehat{A}$. Since $\delta,\delta^{-1}$ and $\nabla$ have fixed
bidegrees with respect to bigrading $W^{(i)}(E)\otimes\wedge^n
E^*$,  we can extend them to derivations of $\widehat{W}(E)\otimes
\wedge E^*$ in such a way that Eq.~(\ref{eq1}) remains true and
they preserve $\widehat{\mathcal{I}}$. So we will write $A,W(E)$
etc. instead of $\widehat{A}$, $\widehat{W}(E)$ etc.

Let
$$r_0=\delta^{-1}T, \qquad r_{n+1}=\delta^{-1}\left(R+\nabla r_n +
\frac{1}{\lambda}r^2_n \right), \quad n\in\mathbb{N}_0$$ Then it
is well known that the sequence $\{r_n\}$ has a limit $r\in
W_{(2)}(E)\otimes \wedge^1 E^*$. Then we can define well-known
Fedosov complex $DW^\bullet=(W(E)\otimes \wedge^n E^*, \ D)$ with
the differential (\cite{Fedosov1,Fedosov2}, see
also~\cite{Donin,KS} for the case of nonzero torsion)
$$D=-\delta+\nabla+\frac{1}{\lambda} \lbr{r}{\cdot}.$$
Let $F$ be an Abelian group which is complete with respect to its
decreasing filtration $F_i ,\ i\in\mathbb{N}_0$, $\cup F_i=F$,
$\cap F_i=\emptyset$. Let $\deg a=\max \{i: a\in F_i\}$ for $a\in
F$.
\begin{lemma}[\cite{Donin}]\label{lem3}
 Let $\varphi:\ F\rightarrow F$ be a set-theoretic map such that
$\deg (\varphi(a)-\varphi(b))>\deg (a-b)$ for all $a,b\in F$. Then the map $Id+\varphi$ is invertible.
\end{lemma}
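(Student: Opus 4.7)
The plan is to realize $Id+\varphi$ as a bijection of $F$ by a fixed-point iteration, in the spirit of the Banach contraction principle but with the filtration topology replacing a metric and the strict degree increase playing the role of the contraction factor. Solving $(Id+\varphi)(a)=c$ for given $c\in F$ is equivalent to finding a fixed point of the set-theoretic map $\psi_c:F\to F$, $\psi_c(a)=c-\varphi(a)$, so the lemma reduces to showing that $\psi_c$ has a unique fixed point for every $c\in F$.

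Uniqueness is immediate from the contraction hypothesis. If $a+\varphi(a)=b+\varphi(b)$, then $a-b=\varphi(b)-\varphi(a)$, whence, adopting the convention $\deg 0 = +\infty$ (consistent with $\bigcap_i F_i=\emptyset$), one obtains
$$\deg(a-b)=\deg(\varphi(b)-\varphi(a))>\deg(a-b),$$
which forces $a-b\in F_i$ for every $i$, hence $a=b$.

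For existence I iterate. Set $a_0=c$ and $a_{n+1}=c-\varphi(a_n)$. Then $a_{n+1}-a_n=\varphi(a_{n-1})-\varphi(a_n)$, so the hypothesis yields $\deg(a_{n+1}-a_n)>\deg(a_n-a_{n-1})$; since these degrees lie in $\mathbb{N}_0$, an easy induction shows $\deg(a_{n+1}-a_n)\to\infty$. Thus $(a_n)$ is Cauchy with respect to the filtration, and completeness of $F$ provides a limit $a\in F$. The strict inequality also forces $\varphi$ to be (uniformly) continuous for the filtration topology, so I may pass to the limit in the recursion to conclude $a=c-\varphi(a)$, i.e.\ $(Id+\varphi)(a)=c$. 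Bijectivity follows.

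There is no genuine obstacle here; the statement is a filtered/complete analogue of the Banach fixed-point theorem. The only point deserving care is that $\varphi$ is merely set-theoretic — in particular, not assumed additive or continuous — but the hypothesis itself forces the continuity needed to commute the iteration with $\varphi$ in the last step, so the argument closes without additional assumptions.
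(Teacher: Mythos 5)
Your proof is correct; note that the paper itself gives no proof of this lemma, quoting it from Donin's article, and your successive-approximation argument (uniqueness from the strict degree inequality, existence via the Cauchy iteration $a_{n+1}=c-\varphi(a_n)$ together with the automatic continuity of $\varphi$) is exactly the standard proof of this fact. The only point worth flagging is the convention $\deg 0=+\infty$: the paper writes $\cap F_i=\emptyset$ where it plainly means that the filtration is separated ($\cap F_i=\{0\}$), and your reading is the right one.
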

Let $Q:\ W(E)\otimes\wedge E^* \rightarrow W(E)\otimes\wedge E^*$,
$Q=Id+\delta^{-1}(D-\delta)$ be a $\mathbb{C}[[\lambda]]$-linear
map, then it is well known that $\delta Q=QD$ and from
Lemma~\ref{lem3} it follows that $Q$ is invertible, so it is a
chain equivalence and we obtain
$$ H^n (Q):\ H^n(DW^\bullet)\cong_{\mathbb{C}[[\lambda]]} H^n (\delta W^\bullet), \quad
n\in\mathbb{N}_0 $$
Using~(\ref{eq7}),(\ref{eq10}),(\ref{eq11}),(\ref{eq11'}) and
taking into account that $\mathcal{I}$ is a left ideal in
$W(E)\otimes\wedge E^*$ we have $r_n\in \mathcal{I}$ for all $n$,
so $r\in \mathcal{I}$.
Using~(\ref{eq6}),(\ref{eq7}),(\ref{eq10}),(\ref{eq11a}) we see
that $D\mathcal{I}_{\text{fin}} \subset \mathcal{I}_{\text{fin}}$
and $Q\mathcal{I}_{\text{fin}} \subset \mathcal{I}_{\text{fin}}$,
so we can define the subcomplex
$D{\mathcal{I}}_{\text{fin}}^\bullet=({\mathcal{I}}_{\text{fin}},D)$
and using Lemma~\ref{lem3} we obtain
$$ H^n (Q):\ H^n (D\mathcal{I}_{\text{fin}}^\bullet)\cong_{\mathbb{C}[[\lambda]]}
 H^n (\delta\mathcal{I}_{\text{fin}}^\bullet),\quad n\in\mathbb{N}_0 . $$
Then due to~(\ref{eq4}),(\ref{eq13}) we have the following diagram
\begin{equation}\label{cd-fin}
\xymatrix{
A=H^0 (\delta W^\bullet)\ar[r]_(.5)\cong^(.55) {Q^{-1}} & H^0 (DW^\bullet) \\
\Phi=H^0 (\delta\mathcal{I}_{\text{fin}}^\bullet)
\ar@<-2.35ex>[u]_\bigcup \ar[r]_(.5)\cong^(.55) {Q^{-1}} & H^0
(D\mathcal{I}_{\text{fin}}^\bullet).\ar[u]_\bigcup }
\end{equation}
Then we can define the Fedosov-type star-product $A\times A\ni
(f,g)\mapsto f\ast_L g\in A$ on $U$ carrying the multiplication
from $H^0 (DW^\bullet)$ onto $H^0 (\delta W^\bullet)$:
 \begin{equation}\label{ast}
f\ast_L g=Q(Q^{-1}f \circ Q^{-1}g).
\end{equation}
Then from~(\ref{cd-fin}) we see that $\Phi$ is a left ideal in
$\mathcal{A}_L (U)=(A,\ast_L)$ since $H^0
(D\mathcal{I}_{\text{fin}}^\bullet)$ is a left ideal in $H^0
(DW^\bullet)$. Since the choice of $\wp$ does not affects
$\ast_L$, we have proved the following result.
\begin{prop}
Let $M$ be a symplectic manifold and let $\mathcal{D}\subset TM$
be a real polarization on $M$.  Then in a certain coordinate neighborhood of an arbitrary point of $M$  we can construct a star-product  adapted to all the leaves of $\mathcal{D}$ which depends on the intrinsic geometric structures on $M$ only.
 \end{prop}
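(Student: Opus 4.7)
The plan is to assemble the ingredients prepared in Sections~2--4 into a Fedosov-type construction that is sensitive to the polarization $L$, and then to read off adaptedness from the compatibility of the relevant ideals with the differential $D$ and the chain equivalence $Q$.

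First I would fix, near an arbitrary point of $M$, a torsion-free symplectic connection $\nabla$ with respect to which $\mathcal{D}$ is self-parallel; the existence of such a $\nabla$ is guaranteed by the argument alluded to via \cite{BF} and \cite{Xu}. With this choice in hand, all the inclusions (\ref{eq6}), (\ref{eq7}), (\ref{eq10}), (\ref{eq11}), (\ref{eq11'}) and (\ref{eq11a}) hold, so the left ideal $\mathcal{I}$ (and its enlargement $\mathcal{I}_{\text{fin}}$ by the vanishing ideal of a chosen leaf $\wp$) is stable under $\delta$, $\delta^{-1}$, $\nabla$, and contains $R$, $T$. I would then recall the Fedosov recursion $r_0=\delta^{-1}T$, $r_{n+1}=\delta^{-1}(R+\nabla r_n+\lambda^{-1}r_n^2)$; convergence in the $W_{(i)}$-filtration is standard, and since each $r_n$ lies in the left ideal $\mathcal{I}$ (which is closed under all the operations used), so does the limit $r$.

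Next I would introduce the Fedosov differential $D=-\delta+\nabla+\lambda^{-1}\lbr{r}{\cdot}$ and the map $Q=\mathrm{Id}+\delta^{-1}(D-\delta)$. Because $D-\delta$ strictly raises the $W_{(i)}$-degree, Lemma~\ref{lem3} applies and $Q$ is a $\mathbb{C}[[\lambda]]$-linear isomorphism satisfying $\delta Q=QD$. Combined with (\ref{eq4}) this yields the horizontal isomorphism in the top row of (\ref{cd-fin}), so one can transport the associative product $\circ$ on $H^0(DW^\bullet)$ to $A=H^0(\delta W^\bullet)$ by formula (\ref{ast}), defining the star-product $\ast_L$. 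Associativity is inherited for free.

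For adaptedness I would verify that $D$ and $Q$ preserve $\mathcal{I}_{\text{fin}}$: $\delta$ and $\nabla$ preserve it by (\ref{eq6}), (\ref{eq10}), (\ref{eq11a}); $\lbr{r}{\cdot}$ preserves it because $r\in\mathcal{I}$ and $\mathcal{I}$ is a left ideal of $W(E)\otimes\wedge E^\ast$; and $\delta^{-1}$ preserves it by (\ref{eq7}). Hence $D\mathcal{I}_{\text{fin}}^\bullet$ is a subcomplex and $Q|\mathcal{I}_{\text{fin}}$ is again invertible by Lemma~\ref{lem3}, giving the bottom isomorphism in (\ref{cd-fin}). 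Since $H^0(D\mathcal{I}_{\text{fin}}^\bullet)$ is a left ideal of $H^0(DW^\bullet)$, transporting via $Q$ turns $\Phi=H^0(\delta\mathcal{I}_{\text{fin}}^\bullet)$ into a left ideal of $\mathcal{A}_L(U)$, which is exactly the adaptedness of $\ast_L$ to $\wp$.

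The main subtlety, and the step I would single out as the conceptual crux, is that the star-product $\ast_L$ must not depend on the particular leaf $\wp$ used in the construction: indeed the data $\delta$, $\nabla$, $r$, $D$, $Q$ are determined by $\omega$, $L$, and $\nabla$ alone, and the leaf $\wp$ enters only through the subideal $\mathcal{I}_\Phi$, not through $\ast_L$ itself. Consequently the single star-product $\ast_L$ is simultaneously adapted to every leaf of $\mathcal{D}$ meeting $U$, and since $\nabla$ is built canonically from $(\omega,\mathcal{D})$ the whole construction depends only on the intrinsic data, completing the proof.
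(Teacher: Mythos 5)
Your proposal follows the paper's own argument essentially step for step: the same recursion for $r$ with $r\in\mathcal{I}$ by stability of the left ideal under $\delta^{-1}$, $\nabla$ and the inclusions (\ref{eq10})--(\ref{eq11'}), the same use of Lemma~\ref{lem3} to invert $Q$ on both $W(E)\otimes\wedge E^*$ and $\mathcal{I}_{\text{fin}}$, the same transport of the product via (\ref{ast}), and the same closing observation that $\wp$ enters only through $\mathcal{I}_\Phi$ and not through $\ast_L$. This is correct and matches the paper's proof; the only cosmetic difference is your claim that $\nabla$ is built ``canonically'' from $(\omega,\mathcal{D})$, whereas the paper only asserts existence of a suitable connection and coordinate-independence of the resulting construction.
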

This extends the results of Reshtikhin and Yakimov~\cite{RY}, Xu~\cite{Xu}, and Donin~\cite{BD}
  who constructed  commutative subalgebras of
Fedosov deformation quantization algebra associated to a Lagrangian fiber bundle, lagrangian submanifold and polarization respectively.
\begin{cor}\label{cor1}
$A/\Phi\cong C^\infty (\wp\cap U)$ has a natural structure of left
$\mathcal{A}_L (U)$-module.
\end{cor}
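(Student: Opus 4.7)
The plan is to reduce the corollary to the Proposition immediately preceding it, which has already done the main work of showing that $\Phi$ is a left ideal in $\mathcal{A}_L(U)=(A,\ast_L)$. Once one has a left ideal $J$ in any associative algebra $\mathcal{B}$, the quotient abelian group $\mathcal{B}/J$ carries a canonical left $\mathcal{B}$-module structure defined by $b\cdot [c]:=[b\ast c]$, with well-definedness a routine check using the left-ideal property. Specialising to $\mathcal{B}=\mathcal{A}_L(U)$ and $J=\Phi$ therefore endows $A/\Phi$ with a left $\mathcal{A}_L(U)$-module structure.

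It remains to identify $A/\Phi$ with $C^\infty(\wp\cap U)$, keeping in mind the convention fixed in Section~2 that $A$ denotes $C^\infty(U,\mathbb{C})[[\lambda]]$, so that ``$C^\infty(\wp\cap U)$'' here means $C^\infty(\wp\cap U,\mathbb{C})[[\lambda]]$. Consider the restriction map
\[
\rho:\ A\longrightarrow C^\infty(\wp\cap U,\mathbb{C})[[\lambda]],\qquad \sum_k f_k\lambda^k\longmapsto \sum_k (f_k|_{\wp\cap U})\lambda^k.
\]
By the very definition of $\Phi$ one has $\ker\rho=\Phi$. For surjectivity we exploit the fact that $\mathcal{D}$ is an involutive $\nu$-dimensional distribution: after shrinking $U$ if necessary to a foliation chart with coordinates $(q^\alpha,p^\beta)$ in which the leaves of $\mathcal{D}$ appear as the level sets $\{p^\beta=\mathrm{const}\}$, the intersection $\wp\cap U$ (or, if we insist, a single plaque thereof) is cut out by fixing the $p^\beta$, and every smooth function on it extends to $U$ by making it independent of the transverse coordinates. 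Extending coefficient by coefficient in $\lambda$ shows that $\rho$ is surjective. Hence $\rho$ descends to a $\mathbb{C}[[\lambda]]$-linear isomorphism $A/\Phi\cong C^\infty(\wp\cap U)$, and transporting the module action across this isomorphism yields the claim.

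The only non-formal point is the surjectivity of $\rho$, which rests on the local product structure of a foliation; this is the one place where the corollary uses anything beyond the algebraic content of the preceding Proposition, but it is entirely classical and requires no new computation.
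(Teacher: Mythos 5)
Your proposal is correct and takes essentially the same route as the paper, which offers no separate proof: the corollary is the ``obvious'' consequence, already announced in the Definition of an adapted star-product in the Introduction, of the preceding Proposition that $\Phi$ is a left ideal in $\mathcal{A}_L(U)$, so that $A/\Phi$ inherits the quotient module structure. Your explicit check that the restriction map identifies $A/\Phi$ with $C^\infty(\wp\cap U)[[\lambda]]$ is a detail the paper leaves tacit, and your parenthetical caveat about restricting to a single plaque is well taken, since surjectivity of restriction can fail when the leaf meets $U$ in infinitely many plaques.
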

This extends the results of Bordemann, Neumaier and Waldmann~\cite{cot} who constructed the modules over Fedosov deformation quantization of cotangent bundles.

Suppose $\mathcal{D}'$ is a polarization, then $M$ is a
\textit{bi-Lagrangian manifold}~\cite{Etayo} (called the Fedosov
manifold of Wick type in~\cite{Dolg}) and we can  choose a
connection on $M$ such that $\nabla_z x\in L$ and $\nabla_z y\in
L'$, $x\in L,\ y\in L', \ z\in E$. It is well known that for any
two transversal involutive distributions
$\mathcal{D},\mathcal{D}'$ we can choose a coordinate system $\{
x_i\}$ in a certain neighborhood $U$ of any point of $M$ such that
$\{ \partial/\partial x_\alpha |\, \alpha=1,\ldots,\nu\}$ and $\{
\partial/\partial x_\beta|\, \beta=\nu+1,\ldots,2\nu\}$ are local
bases in $L$ and $L'$ respectively (\cite{BC}, Ch.1, Problem~30;
see also~\cite{W}, Sec.4.9 for the case when
$\mathcal{D},\mathcal{D}'$ are polarizations). Let $\wp'$ be a
leaf of $\mathcal{D}'$ such that $\wp'\cap U\not=\emptyset$ and
$\Phi'=\{ f\in A|\ f|\wp'=0\}$.
 Then analogously to Proposition~2 we see that $\Phi'$ is a right ideal in $\mathcal{A}_L (U)$. Write the star-product~(\ref{ast}) as a bidifferential operator on $U$:
t$$f\ast_L g=\sum\limits_{r,s}\sum\limits_{i_1,\ldots,i_r\atop j_1,\ldots,j_s}
\Lambda^{i_1\ldots i_r |j_1 \ldots j_s} \frac{\partial^r
f}{\partial x^{i_1}\ldots \partial x^{i_r}} \frac{\partial^s
g}{\partial x^{j_1}\ldots \partial x^{j_s}}.$$ Since $\Phi$ is a
left ideal in $\mathcal{A}_L (U)$, we see that $\Lambda^{i_1\ldots
i_r| \beta_1 \ldots \beta_s}\in\Phi$ and analogously
$\Lambda^{\alpha_1\ldots\alpha_r|j_1\ldots j_s}\in \Phi'$ since
$\Phi'$ is a right ideal. But $\wp,\wp'$ are arbitrary, so
$\ast_L$ is a star-product with separation of variables in the
sense of Karabegov~\cite{Kar96} (called a star-product of Wick
type in~\cite{BW}):
$$f\ast_L g=\sum\limits_{r,s} \sum\limits_{\beta_1\ldots\beta_r \atop
\alpha_1\ldots\alpha_s}
\Lambda^{\beta_1\ldots\beta_r|\alpha_1\ldots\alpha_s}
\frac{\partial^r f}{\partial x^{\beta_1}\ldots \partial
x^{\beta_r}} \frac{\partial^s g}{\partial x^{\alpha_1}\ldots
\partial x^{\alpha_s}}.$$
This extends the results of Bordemann and
Waldman~\cite{BW} who constructed a Fedosov star-product of Wick
type on arbitrary K\"ahler manifold.

\end{document}